\newtheorem{theorem}{Theorem}[section]
\newtheorem{lemma}[theorem]{Lemma}
\newcommand{\A}{\mathcal A}
\newcommand{\F}{\mathcal{F}}
\newcommand{\K}{\mathbb K}
\renewcommand{\rho}{\varrho}
\newcommand{\w}{\omega}
\newcommand{\IU}{\mathbb U}
\newcommand{\IN}{\mathbb N}
\newcommand{\IK}{\mathbb K}
\title{Embedding topological fractals in universal spaces}
\author{Taras Banakh and Filip Strobin}
\address{Jan Kochanowski University in Kielce (Poland), and Ivan Franko National University of Lviv (Ukraine)}
\email{t.o.banakh@gmail.com}
\address{Institute of Mathematics, Jan Kochanowski University in Kielce, (Poland), and Institute of Mathematics, \L\'od\'z University of Technology, W\'olcza\'nska 215, 93-005, \L\'od\'z (Poland)}
\email {filip.strobin@p.lodz.pl}
\subjclass[2010]{Primary: 28A80; Secondary: 37C25, 37C70}
\keywords{Topological fractal, universal Urysohn space, Banach contraction, Rakotch contraction}
\date{}
\begin{document}

\begin{abstract} {Let $X$ be a universal (Urysohn) space. We prove that every topological fractal is homeomorphic (isometric) to the attractor $A_\F$ of a function system $\F$ on $X$ consisting of Rakotch contractions.}
\end{abstract}
\maketitle

\section{Introduction}
Let $X$ be a topological space. By a {\em function system} on $X$ we shall understand any finite family $\F$ of continuous self-maps of $X$. Every function system $\F$ generates the mapping
$$\F:\K(X)\to \K(X),\;\;\F:K\mapsto \bigcup_{f\in\F}f(K),$$on the hyperspace $\K(X)$ of non-empty compact subsets of $X$, endowed with the Vietoris topology. If the topology of $X$ is generated by a metric $d$, then the Vietoris topology on $\K(X)$ is generated by the Hausdorff metric
$$d_H(A,B)=\max\{\max_{a\in A}d(a,B),\max_{b\in B}d(A,b)\}.$$
We shall say that a compact set $A\in\K(X)$ is an {\em attractor} of a function system $\F$ if $\F(A)=A$ and for every compact set $K\in\K(X)$ the sequences of iterations $\F^n(K)=\F\circ\dots\circ\F(K)$ converges to $A$ in the hyperspace $\K(X)$. A function system $\F$ on a Hausdorff topological space $X$ can have at most one attractor, which will be denoted by $A_\F$. The following classical result of a Hutchinson-Barnsley theory of fractals \cite{H} detects function systems possessing attractors.

\begin{theorem}\label{T1} Each function system $\F$ consisting of Banach contractions of a complete metric space $X$ has a unique attractor $A_\F$.
\end{theorem}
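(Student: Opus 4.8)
The plan is to deduce the theorem from the classical Banach contraction principle, applied not to $X$ itself but to the hyperspace $\K(X)$ equipped with the Hausdorff metric $d_H$. The induced map $\F:\K(X)\to\K(X)$ sending $K$ to $\bigcup_{f\in\F}f(K)$ is precisely the object whose fixed points are the self-similar compacta we seek, so it suffices to exhibit this map as a Banach contraction of a complete metric space and then read off its fixed point and the convergence of iterates as the attractor property.

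First I would verify that $(\K(X),d_H)$ is itself complete whenever $(X,d)$ is complete. This is the standard but slightly technical fact that the hyperspace of non-empty compact subsets inherits completeness: given a $d_H$-Cauchy sequence $(K_n)$ in $\K(X)$, one forms the set of all limits of convergent sequences $(x_n)$ with $x_n\in K_n$, and then checks that this set is non-empty, compact, and the $d_H$-limit of $(K_n)$. Second, I would establish the contraction estimate. For a single Banach contraction $f$ with Lipschitz constant $L_f<1$ one has $d_H(f(A),f(B))\le L_f\,d_H(A,B)$, which follows immediately from the definition of $d_H$ and the Lipschitz inequality. The key combinatorial step is the inequality $d_H\big(\bigcup_i A_i,\bigcup_i B_i\big)\le\max_i d_H(A_i,B_i)$ for the Hausdorff metric; combining it with the single-map estimate yields $d_H(\F(A),\F(B))\le L\,d_H(A,B)$ with $L=\max_{f\in\F}L_f<1$, where finiteness of $\F$ is exactly what keeps this maximum strictly below $1$.

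With these two ingredients in hand, the Banach fixed point theorem applied to the contraction $\F$ on the complete space $(\K(X),d_H)$ yields a unique compact set $A_\F$ with $\F(A_\F)=A_\F$, together with the convergence $\F^n(K)\to A_\F$ in $\K(X)$ for every starting compactum $K$, which is precisely the attractor property (and uniqueness on a Hausdorff space was already noted). I expect the main obstacle to be the completeness of the hyperspace: proving that the natural candidate limit of a Cauchy sequence of compacta is again compact requires a careful total-boundedness argument, whereas the contraction estimate and the final invocation of Banach's theorem are routine.
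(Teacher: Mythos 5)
Your proposal is correct and follows exactly the route the paper attributes to Hutchinson: the function system induces a Banach contracting map $\F:\K(X)\to\K(X)$ on the hyperspace with the Hausdorff metric, and the Banach Contracting Principle applied there yields the unique attractor. The two ingredients you isolate (completeness of $(\K(X),d_H)$ and the estimate $d_H(\F(A),\F(B))\le\max_{f\in\F}L_f\cdot d_H(A,B)$) are precisely what that argument requires, so there is nothing to add.
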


The proof of Theorem \ref{T1} given in \cite{H} (cf. also \cite{B}) uses the observation that a Banach contracting function system $\F$ {(i.e., a function system consisting of Banach contractions)} on a complete metric space $X$ induces a Banach contracting map $\F:\K(X)\to\K(X)$ of the hyperspace, which makes possible to apply {the} Banach Contracting Principle to show that $\F$ has an attractor $A_\F$.

It turns out that the Banach contractivity of $\F$ in Theorem~\ref{T1} can be weakened to the $\varphi$-contractivity, which is defined as follows.

A map $f:X\to Y$ between metric spaces $(X,d_X)$ and $(Y,d_Y)$ is called  {\em $\varphi$-contracting} for a function $\varphi:[0,\infty)\to[0,\infty)$ if $d_Y(f(x),f(x'))\le \varphi(d_X(x,x'))$ for every points $x,x'\in X$. It follows that $f:X\to Y$ is Banach contracting {(i.e., it is a Lipschitz mapping with the Lipschitz constant less than $1$)} if and only if it is $\varphi$-contracting for some function $\varphi:[0,\infty)\to[0,\infty)$ such that $\sup_{0<t<\infty}\varphi(t)/t<1$.

A function $f:X\to Y$ is called
\begin{itemize}
\item {\em Rakotch contracting} if $f$ is $\varphi$-contracting for a function $\varphi:[0,\infty)\to[0,\infty)$ such that $\sup_{a<t<\infty}\varphi(t)/t<1$ for every $a>0$;
\item {\em Matkowski contracting} if $f$ is $\varphi$-contracting for a function $\varphi:[0,\infty)\to[0,\infty)$ such that $\lim_{n\to\infty}\varphi^n(t)=0$ for every $t>0$, where $\varphi^n$ denotes the $n$th iteration of $\varphi$;
\item {\em Edelstein contracting} if $d_Y(f(x),f(x'))<d_X(x,x')$ for any distinct points $x,x'\in X$.
\end{itemize}
It is known (\cite{JJ}) that {Rakotch contracting maps are Matkowski contracting, Matkowski contracting are Edelstein contracting} and that if $X$ is compact, then $f:X\to Y$ is Rakotch contracting if and only if it is Edelstein contracting. {The notions of Rakotch, Matkowski and Edelstein contracting maps are connected with certain generalizations of the Banach Contracting Principle, cf. \cite{E}, \cite{Mat}, \cite{Ra}.}

A topological version of Theorem~\ref{T1} was recently proved by Mihail \cite{M} who introduced the following notion (cf. also \cite{BI} for a particular vesion of it): A function system $\F$ on a Hausdorff topological space $X$ is called  \emph{topologically contracting} if
\begin{itemize}
\item for every $K\in\K(X)$, there is $D\in\K(X)$ such that $K\subset D$ and $\F(D)\subset D$;
\item for every $D\in\K(X)$ with $\F(D)\subset D$, and a sequence $\vec f=(f_n)_{n\in\w}\in\F^\w$, the set
$\bigcap_{n\in\w}f_0\circ\dots\circ f_n(D)
$
is a singleton.
\end{itemize}
It can be seen that in this case the singleton $\{\pi(\vec f)\}=\bigcap_{n\in\w}f_0\circ\dots\circ f_n(D)$ does not depend on the choice of the compact set $D$ and the map $\pi:\F^\w\to X$, $\pi:\vec f\mapsto \pi(\vec f)$, is continuous {(here  $\F^\w$ carries the topology of Tychonoff product of countably many copies of the finite space $\F$ endowed with the discrete topology)}. Moreover, the compact metrizable space $A_\F=\pi(\F^\w)$ is the attractor of the function system $\F$. This fact was proved by Mihail \cite{M}:

\begin{theorem}\label{T2} Every topologically contracting function system $\F$ on a Hausdorff topological space $X$ has an attractor $A_\F$, which is a compact metrizable space.
\end{theorem}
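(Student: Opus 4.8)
The plan is to realize the attractor explicitly as $A_\F=\pi(\F^\w)$ and then verify the three requirements: that $A_\F$ is compact metrizable, that $\F(A_\F)=A_\F$, and that $\F^n(K)\to A_\F$ for every $K\in\K(X)$ (uniqueness is already guaranteed, as noted above). By the first condition of topological contractivity (applied to a one-point set) there is $D\in\K(X)$ with $\F(D)\subseteq D$; fix such a $D$. For a word $\vec f=(f_n)_{n\in\w}\in\F^\w$ write $K_n(\vec f)=f_0\circ\dots\circ f_{n-1}(D)$; these form a decreasing sequence of nonempty compacta (decreasing because $f_{n-1}(D)\subseteq\F(D)\subseteq D$), and I define $\pi(\vec f)$ by $\{\pi(\vec f)\}=\bigcap_n K_n(\vec f)$, which is legitimate by the second condition. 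The single tool used throughout is the elementary lemma: if $(C_n)$ is a decreasing sequence of compacta in a Hausdorff space and $U$ is open with $\bigcap_n C_n\subseteq U$, then $C_n\subseteq U$ for some $n$ (apply the finite intersection property to the compacta $C_n\setminus U$, whose intersection is empty).

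Continuity of $\pi$ and independence of $D$ come first. For continuity, fix $\vec f$ and an open $U\ni\pi(\vec f)$; the lemma gives $N$ with $K_N(\vec f)\subseteq U$, and every word agreeing with $\vec f$ on its first $N$ coordinates is mapped by $\pi$ into $K_N(\vec f)\subseteq U$. Independence of the choice of $D$ follows by applying the second condition to $D\cup D'$ (which again satisfies $\F(D\cup D')\subseteq D\cup D'$): the singleton it produces contains both the singleton coming from $D$ and the one coming from $D'$, forcing them to coincide. Hence $A_\F=\pi(\F^\w)$ is well defined. Since $\F^\w$ is compact metrizable (a countable product of finite discrete spaces) and $\pi$ is continuous, $A_\F$ is compact Hausdorff; being a Hausdorff continuous image of a compact metrizable space it has countable weight and is therefore metrizable by Urysohn's theorem.

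For invariance I will prove $g(\pi(\vec f))=\pi(\vec g)$ for each $g\in\F$, where $\vec g\in\F^\w$ is obtained from $\vec f$ by prepending $g$. This reduces to the identity $g\bigl(\bigcap_n K_n(\vec f)\bigr)=\bigcap_n g(K_n(\vec f))$, valid for continuous $g$ on a decreasing sequence of compacta in a Hausdorff space; the nontrivial inclusion uses that $g^{-1}(y)$ is closed, so $g^{-1}(y)\cap K_n(\vec f)$ is a decreasing sequence of compacta with nonempty intersection. Since every word of $\F^\w$ begins with some $g\in\F$, summing over $g$ yields $\F(A_\F)=\bigcup_{g\in\F}g(\pi(\F^\w))=\pi(\F^\w)=A_\F$.

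The convergence statement is the main work. Fix $K\in\K(X)$ and, using the first condition together with the already-proved independence of $D$, choose $D\supseteq K$ with $\F(D)\subseteq D$; then $\F^n(K)\subseteq\F^n(D)\subseteq D$ and $A_\F\subseteq D$. I verify the two half-conditions of Vietoris convergence. For convergence from above, given open $U\supseteq A_\F$, consider the tree $T$ of words $w$ with $f_w(D)\not\subseteq U$; it is prefix-closed (longer words give smaller images), and if it were infinite then, being finitely branching, König's lemma would produce a branch $\vec f$ with $K_n(\vec f)\not\subseteq U$ for all $n$, contradicting the lemma since $\pi(\vec f)\in A_\F\subseteq U$. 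Hence $T$ is finite, so for all large $n$ every length-$n$ word $w$ satisfies $f_w(D)\subseteq U$, giving $\F^n(K)\subseteq\F^n(D)=\bigcup_{|w|=n}f_w(D)\subseteq U$. For convergence from below, given open $V$ with $V\cap A_\F\neq\emptyset$, pick $\vec f$ with $\pi(\vec f)\in V$ and any $x_0\in K$; then $f_0\circ\dots\circ f_{n-1}(x_0)\in\F^n(K)$ lies in $K_n(\vec f)$, which eventually sits inside any neighborhood of $\pi(\vec f)$ by the lemma, so $\F^n(K)\cap V\neq\emptyset$ for large $n$. Thus $\F^n(K)\to A_\F$ and $A_\F$ is the attractor. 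I expect the two genuine obstacles to be the uniform upper estimate, which requires the compactness of $\F^\w$ in the guise of König's lemma rather than a naive pointwise argument, and the passage to metrizability, which must be handled with care since $X$ is only assumed Hausdorff.
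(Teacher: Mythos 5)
Your proof is correct and follows the same route the paper sketches (and attributes to Mihail): define $\pi:\F^\w\to X$ via the decreasing intersections $\bigcap_n f_0\circ\dots\circ f_{n-1}(D)$, check independence of $D$ and continuity, and take $A_\F=\pi(\F^\w)$. You have merely filled in the details the paper leaves to the citation of \cite{M} (the K\"onig's lemma argument for the upper Vietoris estimate and the fiberwise compactness argument for $\F(A_\F)=A_\F$), and these are all sound.
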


A Hausdorff topological space $X$ is called a {\em topological fractal} if $X=\bigcup_{f\in\F}f(X)$ for some topologically contracting function system $\F$ on $X$. It follows that for every topologically contractive function system $\F$ on a Hausdorff topological space $X$ its attractor $A_\F$ is a topological fractal.
Mihail's Theorem~\ref{T2} implies that each topological fractal is a compact metrizable space. Moreover, the topology of $X$ is generated by a metric $d$ making all maps $f\in\F$ Rakotch contracting (see \cite{BKNNS} or \cite{MM}).
Topological fractals were introduced and investigated by Kameyama \cite{K} (who called them \emph{self similar sets}) and considered also in \cite{BN} and \cite{D}.

In this paper we shall search for copies of (Banach) topological fractals in universal (metric) spaces. A topological space $X$ will be called {\em topologically universal} if every compact metrizable space $K$ admits a topological embedding into $X$. The following realization theorem will be proved in Section~\ref{s2}.

\begin{theorem}\label{main1} If a Tychonoff space $X$ is topologically universal, then every topological fractal is homeomorphic to the attractor $A_\F$ of a topologically contractive function system $\F$ on $X$. If the space $X$ is metrizable, then we can additionally assume that all maps $f\in\F$ are Rakotch contracting with respect to some bounded metric $d$  generating the topology on $X$.
\end{theorem}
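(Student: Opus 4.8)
The plan is to start with an arbitrary topological fractal $Y$. By definition, $Y$ is a Hausdorff space which is the attractor of some topologically contracting function system, and by Mihail's Theorem~\ref{T2} together with the results cited in the excerpt, $Y$ is a compact metrizable space. Since $X$ is topologically universal, there is a topological embedding $e\colon Y\to X$; I would identify $Y$ with its image and treat $Y$ as a compact subset of $X$. The goal is then to produce a topologically contracting function system $\F$ on the \emph{whole} space $X$ whose attractor $A_\F$ coincides (as a subspace of $X$) with this copy of $Y$. The natural idea is to take the maps witnessing the topological contractivity of $Y$ and extend them to self-maps of $X$ in such a way that the extensions still contract compact sets down into $Y$, so that the dynamics on $\K(X)$ is eventually governed by what happens on $\K(Y)$.

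\medskip

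First I would fix a topologically contracting function system $\mathcal G=\{g_1,\dots,g_k\}$ on $Y$ with $Y=\bigcup_i g_i(Y)$, which exists because $Y$ is a topological fractal. Each $g_i\colon Y\to Y$ is a continuous self-map of the compact metrizable space $Y$. The key construction is a \emph{retraction-type} device: I would look for a continuous map $r\colon X\to Y$, or more flexibly a family of continuous maps $f_i\colon X\to X$ with $f_i(X)\subset Y$ and $f_i|_Y=g_i$, so that $\F=\{f_1,\dots,f_k\}$ satisfies $\bigcup_i f_i(X)=\bigcup_i g_i(Y)=Y$. If such extensions exist, then for any compact $K\subset X$ we have $\F(K)=\bigcup_i f_i(K)\subset Y$ after a single step, after which the iterates are exactly those of $\mathcal G$ on $\K(Y)$; the topological contractivity of $\mathcal G$ on $Y$ then forces convergence of $\F^n(K)$ to the attractor $A_{\mathcal G}=Y$, and one verifies the two defining clauses of topological contractivity for $\F$ directly (for the first clause, $D=K\cup Y$ works since $\F(D)\subset Y\subset D$). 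I expect the main obstacle to be producing these extensions: one must embed $Y$ into $X$ so that $Y$ is a retract of a neighborhood, or use a universal-space structure (e.g.\ an AR-type property of Urysohn-like $X$, or embedding into a convex subset of a normed space) to extend the continuous maps $g_i$ to continuous self-maps of $X$ landing in $Y$.

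\medskip

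For the second, metric, assertion I would work with a metric $\rho$ on $X$ generating its topology, which I may assume bounded after replacing $\rho$ by $\min\{\rho,1\}$. By the results cited after Theorem~\ref{T2}, the topology on the copy $Y$ is generated by a metric $d_Y$ making every $g_i$ Rakotch contracting. The difficulty is twofold: the extensions $f_i$ must be made Rakotch contracting on all of $X$ with respect to a \emph{single} bounded metric $d$ on $X$ that simultaneously restricts to (a metric topologically equivalent to) $d_Y$ on $Y$. The natural strategy is to re-metrize $X$: first choose an embedding of $X$ into the Urysohn space (or the realization making $X$ a subspace of a suitable universal metric space), then transport the Rakotch-contracting metric $d_Y$ from $Y$ and glue it to $\rho$ off $Y$ using a partition-of-unity or distance-weighting construction, so that the resulting metric $d$ is bounded, generates the topology of $X$, agrees up to bi-Lipschitz equivalence with $d_Y$ on $Y$, and makes each $f_i$ (which maps into $Y$) Rakotch contracting. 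The hardest point here will be controlling the contraction modulus \emph{uniformly} for points of $X$ that lie far from $Y$: for such points one needs $d(f_i(x),f_i(x'))$ to be dominated by $\varphi(d(x,x'))$ with $\sup_{a<t<\infty}\varphi(t)/t<1$ for every $a>0$, and since $f_i(x),f_i(x')\in Y$ which has bounded $d$-diameter, this can be arranged by building $d$ so that the $d$-distance between distinct points grows (or is bounded below) enough off $Y$, while the Rakotch modulus on $Y$ itself is inherited from $d_Y$. Once the metric $d$ and the maps $f_i$ are in place, verifying that $\F=\{f_1,\dots,f_k\}$ is a Rakotch-contracting function system with attractor the copy of $Y$ is routine, completing the proof.
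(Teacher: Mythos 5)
Your high-level plan --- embed a copy of the fractal $Y$ into $X$, extend the generating maps $g_i\colon Y\to Y$ to continuous maps $f_i\colon X\to X$ with $f_i(X)\subset Y$, and observe that after one step the dynamics collapses into $Y$ --- is exactly the skeleton of the paper's argument. But the step you yourself flag as ``the main obstacle'' is a genuine gap, and it is precisely the point where the paper has to do real work. A general topological fractal $Y$ is \emph{not} an absolute retract (think of the Cantor set, or a circle), so there is no reason why a continuous map $g_i\colon Y\to Y$ should extend to a continuous map $X\to Y$; none of the devices you list resolves this. An AR-type property of the ambient space $X$ is irrelevant here, since what is needed is extendability of maps \emph{into} $Y$, which is a property of the target. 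Embedding $Y$ into a convex subset of a normed space and applying Dugundji's theorem produces an extension with values in $\mathrm{conv}(Y)$, not in $Y$, so the extended system need not map $X$ into $Y$ and its attractor need not be $Y$. The paper's resolution is to replace $Y$ by the space $PY$ of Borel probability measures on $Y$: this \emph{is} a compact convex subset of a locally convex space, hence an absolute retract among Tychonoff spaces, so the induced maps $Pg_i\colon PY\to PY$ do extend to continuous maps $\overline{Pg_i}\colon X\to PY$; Lemma~\ref{l2.1} guarantees that Rakotch contractivity survives the passage to $P$, and the attractor of the extended system is $Y$ because $Y$ sits inside $PY$ as the Dirac measures and satisfies $\bigcup_i Pg_i(Y)=\bigcup_i g_i(Y)=Y$, so uniqueness of the attractor identifies it with $Y$. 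Without this (or some equivalent device) your proof does not go through.

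The second half of your proposal has a similar character: you correctly identify that one must produce a single bounded metric on all of $X$ making the extensions Rakotch contracting, but the partition-of-unity gluing you sketch is not carried out and would be delicate to control. The paper avoids this entirely by quoting the metrization theorem for globally contracting systems (Theorem~\ref{global}): once the extended system $\overline{P\F}$ is known to be globally contracting on $X$ (which follows because all its maps send $X$ into the fixed compactum $PY$ on which $P\F$ is Rakotch contracting), that theorem hands you the bounded metric for free. I would encourage you to look for such a metrization result rather than attempting the hand-built metric.
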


For the universal Urysohn space we can prove a bit more. Let us recall that {the} {\em universal Urysohn space} is a separable complete metric space $\IU$ such that each isometric embedding $f:B\to \IU$ of a subspace $B$ of a finite metric space $A$ extends to an isometric embeddig $\bar f:A\to\IU$. By \cite{Urysohn},  a universal Urysohn space exists and is unique up to a bijective isometry.

A compact metric space $X$ will be called a ({\em Banach}) {\em Rakotch fractal} if $X=\bigcup_{f\in\F}f(X)$ for some function system $\F$ consisting of (Banach) Rakotch contractions of $X$. By \cite{BKNNS} and \cite{MM}, each topological fractal is homeomorphic to a Rakotch fractal. On the other hand, there are examples of Rakotch fractals which are not homeomorphic to Banach fractals (see \cite{BN}, \cite{K}, \cite{NS}). {The following realization theorem will be proved in Section~\ref{s3}.}

\begin{theorem}\label{main2} Each (Banach) Rakotch fractal $X$ is isometric to the attractor $A_\F$ of a function system $\F$ consisting of Banach (Rakotch) contractions of the universal Urysohn space $\IU$.
\end{theorem}

\section{Copies of topological fractals in universal spaces}\label{s2}

In this section we shall prove Theorem~\ref{main1}.
At first we need to recall some information on spaces of probability measures.

For a compact metric space $(X,d_X)$ by $PX$ we shall denote the space of Borel probability measures on $X$ endowed with the metric
$$d_{PX}(\mu,\eta):=\inf\Big\{\underset{X\times X}{\int}d_X(x,y)\;d\lambda:\lambda\in \mathcal{B}(\mu,\eta)\Big\},
$$
where $\mathcal{B}(\mu,\eta)$ is the space of all Borel probability measures on $X\times X$ such that $\pi_1(\lambda)=\mu$ and $\pi_2(\lambda)=\eta$ {(here $\pi_1$ and $\pi_2$ stand for the projections onto the first and the second coordinate, respectively)}. It is known that $(PX,d_{PX})$ is a compact metric space and for every measures $\mu,\eta\in PX$, there is $\lambda\in \mathcal{B}(\mu,\eta)$ such that $d_{PX}(\mu,\eta)=\int_{X\times X}d(x,y)\;d\lambda$ (cf. \cite[Chapter 8, especially Theorem 8.9.3, Theorem 8.10.45, p. 234]{Bo}; here we use the fact that every Borel probability measure on a compact metrizable spaces is Radon \cite[p. 70]{Bo}).

{It follows that the mapping $X\ni x\to \delta_x\in PX$ assigning to each point $x\in X$ the Dirac measure $\delta_x$ supported at $x$ is an isometric embedding of $X$ into $PX$.}
Every continuous map $f:X\to Y$ between compact metric spaces $(X,d_X)$ and $(Y,d_Y)$ induces a continuous map $Pf:PX\to PY$ between their spaces of probability measures. The map $Pf$ assigns to each measure $\mu\in PX$ the measure $Pf(\mu)\in PY$ defined by $Pf(\mu)(B)=\mu(f^{-1}(B))$ for a Borel subset $B\subset Y$.

\begin{lemma}\label{l2.1} If a map $f:X\to Y$ between compact metric spaces $X,Y$ is Rakotch contracting, then the induced map $Pf:P(X)\to P(Y)$ is Rakotch contracting too.
\end{lemma}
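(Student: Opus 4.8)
The plan is to exploit two facts recalled in the Introduction: that every Rakotch contracting map is Edelstein contracting, and that on a compact space Edelstein contractivity is equivalent to Rakotch contractivity. Since $PX$ is compact, it therefore suffices to prove that $Pf\colon PX\to PY$ is Edelstein contracting, i.e. that $d_{PY}(Pf(\mu),Pf(\eta))<d_{PX}(\mu,\eta)$ for any two distinct measures $\mu,\eta\in PX$. As a preliminary observation, because $f$ is Rakotch it is Edelstein, so $d_Y(f(x),f(x'))<d_X(x,x')$ for all distinct $x,x'\in X$, while trivially $d_Y(f(x),f(x))=0=d_X(x,x)$; hence $d_Y(f(x),f(x'))\le d_X(x,x')$ everywhere on $X\times X$, with equality exactly on the diagonal $\Delta=\{(x,x):x\in X\}$.

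Next I would construct a competitor coupling. Fix distinct $\mu,\eta\in PX$ and, using the attainment of the infimum recalled just before the lemma, choose an optimal measure $\lambda\in\mathcal B(\mu,\eta)$ with $d_{PX}(\mu,\eta)=\int_{X\times X}d_X(x,x')\,d\lambda$. Applying the functor $P$ to the continuous map $f\times f\colon X\times X\to Y\times Y$, the measure $P(f\times f)(\lambda)$ is a Borel probability measure on $Y\times Y$ whose two marginals are precisely $Pf(\mu)$ and $Pf(\eta)$, so it belongs to $\mathcal B(Pf(\mu),Pf(\eta))$. Using it as a competitor in the definition of $d_{PY}$ and invoking the change-of-variables formula for pushforwards gives
$$d_{PY}(Pf(\mu),Pf(\eta))\le\int_{Y\times Y}d_Y\,d\big(P(f\times f)(\lambda)\big)=\int_{X\times X}d_Y(f(x),f(x'))\,d\lambda.$$

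The only real obstacle is to upgrade the pointwise estimate $d_Y(f(x),f(x'))\le d_X(x,x')$ to a strict inequality between the two integrals. For this I would consider the continuous nonnegative function $g(x,x')=d_X(x,x')-d_Y(f(x),f(x'))$ on $X\times X$, whose zero set is exactly $\Delta$. If $\int_{X\times X}g\,d\lambda=0$, then $g=0$ holds $\lambda$-almost everywhere, forcing $\lambda(\Delta)=1$; but a coupling concentrated on the diagonal has equal marginals, so $\mu=\eta$, contradicting our choice. Hence $\int_{X\times X}g\,d\lambda>0$, and combining this with the displayed inequality yields
$$d_{PY}(Pf(\mu),Pf(\eta))\le\int_{X\times X}d_Y(f(x),f(x'))\,d\lambda<\int_{X\times X}d_X(x,x')\,d\lambda=d_{PX}(\mu,\eta).$$
Thus $Pf$ is Edelstein contracting, and since $PX$ is compact, it is Rakotch contracting, as required.
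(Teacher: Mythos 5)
Your proof is correct and follows essentially the same route as the paper: reduce to Edelstein contractivity via compactness of $PX$, take an optimal coupling $\lambda$, push it forward by $f\times f$ to get a competitor coupling of $Pf(\mu)$ and $Pf(\eta)$, and compare the two integrals. The only (minor) divergence is in justifying the strict inequality: the paper uses the quantitative Rakotch bound $d_Y(f(x),f(x'))\le c_\delta\, d_X(x,x')$ on a set $X_\delta=\{(x,x'):d_X(x,x')\ge\delta\}$ of positive $\lambda$-measure, whereas you argue softly that $\int (d_X-d_Y\circ(f\times f))\,d\lambda=0$ would force $\lambda$ to concentrate on the diagonal and hence $\mu=\eta$; both arguments are valid.
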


\begin{proof} Being Rakotch contracting, the map $f$ is $\varphi$-contracting for some function $\varphi:[0,\infty)\to[0,\infty)$ such that $c_\delta=\sup_{\delta\le t<\infty}\varphi(t)/t<1$ for every $\delta>0$.
By the compactness of $PX$, the Rakotch contractivity of $Pf$ is equivalent to its Edelstein contractivity. So, it suffices to prove that $d_{PY}(Pf(\mu),Pf(\eta))<d_{PX}(\mu,\eta)$ for any distinct measures $\mu,\eta\in PX$.
{So take any distinct $\mu,\eta\in PX$. As stated earlier}, there is a measure $\lambda\in\mathcal B(\mu,\eta)$ such that $d_{PX}(\mu,\eta)=\int_{X\times X}d_X(x,y) d\lambda$.
Since $d_{PX}(\mu,\eta)>0$, for some $\delta>0$ the compact set $X_\delta=\{(x,x')\in X\times X:d_X(x,x')\ge\delta\}$ has positive measure $\lambda(X_\delta)>0$.

Let $\tilde \lambda\in P(Y\times Y)$ be the image of the measure $\lambda$ under the map $f{\times} f:X\times X\to Y\times Y$, $f\times f:(x,y)\mapsto (f(x),f(y))$. Observe that for every Borel subset $B\subset Y$ we get
$$
\tilde \lambda(B\times Y)=\lambda((f{\times} f)^{-1}(B\times Y))=\lambda(f^{-1}(B)\times f^{-1}(Y))=\lambda(f^{-1}(B)\times X)=\mu(f^{-1}(B))=Pf(\mu)(B)
$$
and similarly
$$
\tilde \lambda(Y\times B)=Pf(\eta)(B),
$$which means that $\tilde \lambda\in\mathcal B(Pf(\mu),Pf(\eta))$. Moreover,
$$
d_{PY}(Pf(\mu),Pf(\eta))\leq\int_{Y\times Y}d_Y(y,y')\;d\tilde \lambda=\int_{X\times X}d_X(f(x),f(x'))\;d\lambda<\int_{X\times X}d_X(x,x')\;d\lambda=d_{PX}(\mu,\eta).
$$
The last strict inequality follows from $\lambda(X_\delta)>0$ and the fact that $d_Y(f(x),f(x'))\le c_\delta \cdot d_X(x,x')<d_X(x,x')$ for every $(x,x')\in X_\delta$.
\end{proof}

We shall also need a metrization theorem for globally contracting function systems, proved in \cite{BKNNS}. A function system $\F$ on a Hausdorff topological space $X$ is called {\em globally contracting} {(\cite[Definition 2.1]{BKNNS})} if there exists a non-empty compact set $K\subset X$ such that $\F(K)\subset K$ and for every open cover $\mathcal U$ of $X$ there is $n\in\mathbb N$ such that for every map $f\in\F^n=\{f_1\circ\dots\circ f_n:f_1,\dots,f_n\in\F\}$ the set $f(X)$ is contained in some set $U\in\mathcal U$. {The following result was proved in \cite[Theorem 6.7]{BKNNS}.}

\begin{theorem}\label{global} A function system $\F$ on a metrizable space $X$ is globally contractive if and only if the topology of $X$ is generated by a bounded metric $d$ making all maps $f\in\F$ Rakotch contractive.
\end{theorem}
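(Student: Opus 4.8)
The plan is to prove the two implications separately; the reverse implication follows from the Hutchinson--Barnsley--Rakotch machinery, while the forward one is a genuine metrization result and carries the real difficulty.

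\smallskip
\noindent\emph{The implication ``$\Leftarrow$''.} Suppose $d$ is a bounded metric generating the topology of $X$ and making every $f\in\F$ Rakotch contracting. Since $\F$ is finite, I would first produce a single \emph{nondecreasing} function $\varphi\colon[0,\infty)\to[0,\infty)$ with $\sup_{a\le t<\infty}\varphi(t)/t<1$ for every $a>0$ such that each $f\in\F$ is $\varphi$-contracting (take the pointwise maximum of the individual moduli and pass to its nondecreasing envelope). A direct estimate for the Hausdorff metric gives $d_H(\F(A),\F(B))\le\varphi(d_H(A,B))$, so the induced map $\F\colon\K(X)\to\K(X)$ is itself $\varphi$-contracting, hence Rakotch; its attractor (which exists by the Rakotch version of Theorem~\ref{T1}) is a compact set $K$ with $\F(K)=K\subseteq K$, supplying the first requirement. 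For the covering condition, fix an open cover $\mathcal U$ of $X$ and invoke a Lebesgue-number argument on the compact set $K$ to obtain $\e>0$ such that every subset of $X$ that meets $K$ and has $d$-diameter $<\e$ lies in a single member of $\mathcal U$. As $\varphi$ is nondecreasing we get $\diam_d f(X)\le\varphi^n(\diam_d X)$ for every $f\in\F^n$, and since Rakotch maps are Matkowski we have $\varphi^n(\diam_d X)\to0$; moreover $f(X)\supseteq f(K)\neq\emptyset$ with $f(K)\subseteq K$, so $f(X)$ meets $K$. Choosing $n$ with $\varphi^n(\diam_d X)<\e$ then forces every $f(X)$, $f\in\F^n$, into a single member of $\mathcal U$.

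\smallskip
\noindent\emph{The implication ``$\Rightarrow$''.} This is the substantial half. Assuming $\F$ is globally contracting, I would fix any bounded metric $\varrho$ generating the topology and build from it a new bounded metric $d$ of \emph{chain} type. Call $x,y$ \emph{$n$-linked} if they lie in a common image $f(X)$ with $f\in\F^n$, choose a decreasing weight sequence $w_n\to 0$ (to be calibrated against a sequence of covers extracted from the covering condition), and set
$$
d(x,y)=\inf\Big\{\textstyle\sum_{i=0}^{m} w_{n_i}\;:\; x=z_0,\dots,z_{m+1}=y,\ z_i,z_{i+1}\ \text{are}\ n_i\text{-linked}\Big\}.
$$
The triangle inequality is built into this infimum, while the separation axiom and topological compatibility rely on the fact, guaranteed by global contractivity, that the $n$-linked relation refines every open cover as $n\to\infty$. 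Contractivity is transparent at the combinatorial level: applying a generator $g\in\F$ turns every $n$-linking into an $(n+1)$-linking, so an optimal chain for $d(x,y)$ maps to a competing chain for $d(g(x),g(y))$ with all weights shifted up by one index; this yields $d(g(x),g(y))\le\varphi(d(x,y))$, where $\varphi$ is governed by the ratios $w_{n+1}/w_n$.

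\smallskip
\noindent\emph{The main obstacle.} The delicate point is not the contraction estimate but the verification that the chain-infimum $d$ genuinely induces the original topology, i.e.\ that $d(x_k,x)\to0$ exactly when $x_k\to x$ in $X$. The easy inequality follows from the refinement provided by the covering condition, but the reverse one requires controlling how the fine images $f(X)$ overlap, which is precisely where the compact invariant set $K$ and the full strength of global contractivity must be used. Intertwined with this is the choice of weights: constant ratios $w_n=2^{-n}$ would produce a Banach contraction, but such weights cannot in general generate the correct topology (otherwise every topological fractal would be a Banach fractal, contrary to the examples cited in the introduction). One is therefore forced to let the $w_n$ decay more slowly, so that the ratios $w_{n+1}/w_n$ tend to $1$; then coarse (small-$n$) links, which account for the large distances, still contract by a definite factor, while fine (large-$n$) links, accounting for the small distances, contract by factors approaching $1$. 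This is exactly the Rakotch profile $\sup_{a\le t}\varphi(t)/t<1$ for every $a>0$, and calibrating the weights so that the resulting modulus is Rakotch rather than merely Matkowski is the technical heart of the argument.
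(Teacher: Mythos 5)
The paper does not actually prove Theorem~\ref{global}: it is imported verbatim from \cite[Theorem 6.7]{BKNNS}, so there is no internal argument to compare yours against, and I can only assess the proposal on its own terms. In the ``$\Leftarrow$'' direction the gap is the compact invariant set $K$. You produce it from ``the Rakotch version of Theorem~\ref{T1}'', but that theorem needs the metric space to be \emph{complete}, whereas the hypothesis only provides a bounded metric generating the topology of a metrizable space. The set $K$ can genuinely fail to exist: for $X=(0,1]$ with its usual (bounded) metric and $\F=\{x\mapsto x/2\}$, the single map is a Banach contraction, yet no non-empty compact $K\subset X$ satisfies $\F(K)\subset K$, since the minimum of $K$ would be sent strictly below itself. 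So this step cannot be repaired within the stated hypotheses (which also indicates that the theorem as reproduced here tacitly presupposes either completeness or a pre-existing $\F$-invariant compactum, as in the cited source). Granting $K$, your Lebesgue-number argument and the estimate $\diam f(X)\le\varphi^n(\diam X)\to 0$ do finish that direction.

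The ``$\Rightarrow$'' direction is a plan rather than a proof, and the two points you explicitly defer are exactly the ones that can fail. First, a Frink-type chain infimum can collapse to $0$ on distinct points; preventing this requires a quantitative refinement condition between consecutive link relations (the classical $V_{n+1}\circ V_{n+1}\circ V_{n+1}\subset V_n$, or a weighted analogue) together with a metrization lemma comparing $d$ to the one-link weight --- ``the $n$-linked relation refines every open cover as $n\to\infty$'' does not by itself rule out collapse, and this is where the real work in \cite{BKNNS} sits. Second, the contraction estimate is not ``transparent at the combinatorial level'': a near-optimal chain for $d(x,y)$ may consist of many links all at one high level $N$, so its image chain shrinks only by the factor $w_{N+1}/w_N$, which you are forced to let tend to $1$, while the total weight $m\,w_N$ can remain above any fixed $a>0$. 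Hence $d(g(x),d(g(y))$ being at most $\varphi(d(x,y))$ for a genuinely \emph{Rakotch} $\varphi$ does not follow from the index shift alone; one must show that any chain of total weight $\ge a$ can be replaced, up to the Frink comparison constant, by one containing a low-level link. Both obstacles are surmountable --- that is the content of the metrization machinery in \cite{BKNNS} --- but as written the proposal names the difficulties without resolving them.
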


Now we are able to present:
\smallskip

\noindent {\em Proof of Theorem~\ref{main1}.} Assuming that $K$ is a topological fractal, find a topologically contracting function system $\F$ on $K$ such that $K=\bigcup_{f\in\F}f(K)$. By \cite[Theorem 6.8]{BKNNS}, the topology of $K$ is generated by a metric $d_K$ making all maps $f\in\F$ Rakotch contracting. By Lemma~\ref{l2.1}, the function system $P\F=\{Pf:f\in\F\}$ consists of Rakotch contracting self-maps of the metric space $(PK,d_{PK})$.

Given any topologically universal Tychonoff space $X$, identify the compact metrizable space $PK$ with a (closed) subspace of $X$. The space $PK$, being a metrizable compact convex subset of a locally convex space, is an absolute retract in the class of Tychonoff spaces (this follows from \cite{Dug} and Tietze-Urysohn Theorem \cite[2.1.8]{En}). This implies that each map $Pf:PK\to PK$, $f\in\F$, can be extended to a continuous map $\overline{Pf}:X\to PK$. The Rakotch contractivity of $P\F$ {and Theorem \ref{global}} implies that the function system $P\F$ is globally contractive and so is the function system {$\overline{P\F}=\{\overline{Pf}:Pf\in P\F\}$} (as {$\overline{P\F}(X)\subset PK$)}.

The global contractivity of $\overline{P\F}$ implies the topological contractivity of $\overline{P\F}$ {(\cite[Theorem 2.2]{BKNNS})}. Then the function system $\overline{P\F}$ has a unique attractor, which coincides with $K$ by the uniquenes of the fixed point of the map $\overline{P\F}:\K(X)\to\K(X)$. Therefore, the topological fractal $K$ is homeomorphic to the attractor of the topologically contracting function system $\overline{P\F}$ on $X$.

If the space $X$ is metrizable, then by Theorem~\ref{global}, the topology of $X$ is generated by a bounded metric $d$ making all maps $f\in\overline{P\F}$ Rakotch contracting.\hfill$\square$
\smallskip

\section{Embedding fractals into the Urysohn universal space}\label{s3}

Recall that the Urysohn space $\IU$ is the unique (up to isometry) complete separable metric space $\IU$ such that every isometric embedding $f:B\to \IU$ {of a finite subset $B\subset \IU$ extends to an isometric embedding $\bar f:\IU\to\IU$, and any separable metric space is isometric to a subspace of $\IU$}. According to {\cite[Theorem 4.1]{Mel}}, the universal Urysohn space has a stronger universality property: every isometric embedding $f:B\to \IU$ of a {compact subspace $B\subset \IU$ extends to an isometric embedding $\bar f:\IU\to\IU$}. In fact, isometric embeddings in this result can be replaced by maps with given oscillation.

For a map $f:X\to Y$ between two metric spaces $(X,d_X)$ and $(Y,d_Y)$ its {\em oscillation} is the function $\w_f:[0,\infty]\to[0,\infty]$ assigning to each $\delta\in[0,\infty]$ the number
$$\w_f(\delta)=\sup\{d_Y(f(x),f(x')):x,x'\in X,\;d_X(x,x')\le\delta\}\in[0,\infty].$$
It follows that $d_Y(f(x),f(x'))\le\w_f(d_X(x,x'))$ for every points $x,x'\in X$. It is clear that the map $f:X\to Y$ is uniformly continuous if and only if $\lim_{\delta\to 0}\w_f(\delta)=0$.

If the metric space $(X,d_X)$ is {\em geodesic} (in the sense that for every points $x,x'\in X$ there is an isometric embedding $\gamma:[0,d_X(x,x')]\to X$ such that $\gamma(0)=x$ and $\gamma(d_X(x,x'))=x'$), then for any map $f:X\to Y$ its oscillation $\w_f$ is {\em subadditive} in the sense that $\w_f(s+t)\le\w_f(s)+\w_f(t)$ for any $s,t\in [0,\infty)$.
This motivates the following definition.

By a {\em continuity modulus} we shall understand any continuous subadditive function  $\varphi:[0,\infty)\to[0,\infty)$ with $\varphi(0)=0$. {It is easy to see that each continuous, concave function $\varphi:[0,\infty)\to[0,\infty)$ with $\varphi(0)=0$ is a continuity modulus. The following lemma uses some ideas from \cite{KR}.}

\begin{lemma}\label{l3.1} Let $\varphi$ be a continuity modulus. Every (injective) map
$f:B\to \IU$ with $\w_f\le\varphi$ defined on a compact subset $B$ of a separable metric space $(A,d_A)$ extends to an (injective) map $\bar f:A\to\IU$ with continuity modulus $\w_{\bar f}\le\varphi$.
\end{lemma}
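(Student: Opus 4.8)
The plan is to build $\bar f$ first on a countable dense extension of $B$, adding points one at a time, and then to pass to all of $A$ by uniform continuity. Since $A$ is separable, I would fix a countable dense set $D\subseteq A$, enumerate $D\setminus B=\{a_1,a_2,\dots\}$, and extend $f$ inductively over the compact domains $C_n=B\cup\{a_1,\dots,a_n\}$. Thus the whole problem reduces to a single one-point step: given a map defined on a compact set $C\subseteq A$ with oscillation $\le\varphi$ and a new point $a\in A$, produce a value $y\in\IU$ with
$$d_\IU\big(y,\bar f(x)\big)\le\varphi\big(d_A(a,x)\big)\qquad\text{for all }x\in C.\qquad(\ast)$$
Adjoining such a $y$ preserves the oscillation bound, because the only new pairs to check are those involving $a$, and these are exactly $(\ast)$.

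The key point is that the radii $r(x)=\varphi(d_A(a,x))$ prescribed in $(\ast)$ are \emph{consistent}: for $x,x'\in C$,
$$d_\IU\big(\bar f(x),\bar f(x')\big)\le\w_{\bar f}\big(d_A(x,x')\big)\le\w_{\bar f}\big(d_A(a,x)+d_A(a,x')\big)\le\varphi\big(d_A(a,x)+d_A(a,x')\big)\le r(x)+r(x'),$$
where the middle inequalities use the monotonicity of the oscillation together with the triangle inequality in $A$ and the bound $\w_{\bar f}\le\varphi$, and the last one uses the subadditivity of $\varphi$. Hence $\bar f(C)$ is a compact subset of $\IU$ on which the function $\rho(p)=\min\{r(x):\bar f(x)=p\}$ (the minimum being over the compact fibre) satisfies $\rho(p)+\rho(q)\ge d_\IU(p,q)$. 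I would then pass to its $1$-Lipschitz reduction $\tau(p)=\inf_{q\in\bar f(C)}\big(\rho(q)+d_\IU(p,q)\big)$, which is a continuous Kat\v etov function on $\bar f(C)$ with $\tau\le\rho$; the consistency estimate is exactly what yields the sum-condition $\tau(p)+\tau(q)\ge d_\IU(p,q)$. Realizing $\tau$ by a point $y\in\IU$, i.e.\ finding $y$ with $d_\IU(y,p)=\tau(p)$ for all $p\in\bar f(C)$, is possible by the one-point extension property of $\IU$ for compact subsets (which follows from the finite injectivity and completeness of $\IU$, and is a special case of the cited theorem of Melleray). This $y$ satisfies $(\ast)$, since $\tau\le\rho\le r$.

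After the induction, $\bar f$ is defined on $B\cup D$ with $\w_{\bar f}\le\varphi$. As $\varphi$ is continuous with $\varphi(0)=0$, this map is uniformly continuous, so it extends uniquely to a uniformly continuous $\bar f:A\to\IU$ (using the completeness of $\IU$ and the density of $B\cup D$ in $A$). Taking limits $x_n\to a$, $x_n'\to a'$ along $B\cup D$ and invoking the continuity of $\varphi$ gives $d_\IU(\bar f(a),\bar f(a'))\le\varphi(d_A(a,a'))$ for all $a,a'\in A$, i.e.\ $\w_{\bar f}\le\varphi$; and $\bar f|_B=f$ because the continuous extension agrees with the continuous map $f$ on the closed set $B$.

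The delicate part is the parenthetical injective version, since injectivity on the dense set $B\cup D$ need not survive the passage to the limit. To control this I would carry along an additional \emph{lower} gauge. As $f$ is a continuous injection of the compact set $B$, the inverse $f^{-1}$ is uniformly continuous, so there is a continuous, concave, strictly positive (on $(0,\infty)$) modulus $g\le\varphi$ with $d_\IU(f(b),f(b'))\ge g(d_A(b,b'))$ on $B$. At each one-point step I would demand, besides $(\ast)$, the lower bound $d_\IU(y,\bar f(x))\ge g(d_A(a,x))$; the same subadditivity bookkeeping shows that the Kat\v etov function $\tau$ already dominates $\ell(p)=g(d_A(a,x_p))$, so $\ell\le\tau\le\rho$ and the realizing point automatically meets both bounds. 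Maintaining $g$ as a lower gauge throughout yields $d_\IU(\bar f(x),\bar f(x'))\ge g(d_A(x,x'))$ on $B\cup D$, and this inequality between continuous functions passes to the limit on all of $A$; since $g$ is positive off the diagonal, $\bar f$ is injective. I expect the verification that a single point of $\IU$ can simultaneously meet the prescribed lower and upper bounds to an entire compact set, that is, the inequality $\ell\le\tau$, to be the main technical obstacle; everything else is routine bookkeeping with subadditivity and the defining properties of $\IU$.
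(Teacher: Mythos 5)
Your construction for the main (non-parenthetical) statement is correct and is in substance identical to the paper's: the paper also enumerates a countable dense set, performs one-point extensions over the compact sets $B_n=B\cup\{a_1,\dots,a_n\}$, and adjoins a point $y$ whose distances to $f_n(B_n)$ are given by $d_Y(z,y)=\min_{b\in B_n}\bigl(d_\IU(z,f_n(b))+\varphi(d_A(b,a_{n+1}))\bigr)$ --- which is exactly your $1$-Lipschitz reduction $\tau(p)=\inf_q(\rho(q)+d_\IU(p,q))$ of the gauge $r(x)=\varphi(d_A(a,x))$; the paper verifies the triangle inequality for $d_Y$ by the same subadditivity computation you use to check the Kat\v etov sum-condition, and then invokes Melleray's theorem exactly as you do. The final passage to $A$ by uniform continuity is also the same. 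So for the non-injective version your proposal is a correct rephrasing of the paper's proof in the language of Kat\v etov functions.

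The injective refinement, however, contains a genuine gap. You correctly sense that injectivity on the dense set $B\cup D$ need not survive the limit (the paper itself only secures injectivity on the dense set, by observing that the adjoined point has positive distance to the previous image), but your proposed fix rests on a false existence claim: there need not be a continuous \emph{concave} (equivalently, monotone subadditive) function $g$ with $g(0)=0$, $g>0$ on $(0,\infty)$ and $d_\IU(f(b),f(b'))\ge g(d_A(b,b'))$ on $B$. Indeed, any concave $g\ge 0$ with $g(0)=0$ and $g(T)>0$ satisfies $g(t)\ge \tfrac{g(T)}{T}\,t$ for $t\in[0,T]$, i.e.\ is bounded below by a positive linear function near $0$; the same conclusion holds for monotone subadditive $g$, since $g(t)\ge 2^{-n}g(2^nt)$. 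But the ``injectivity modulus'' $h(\delta)=\inf\{d_\IU(f(b),f(b')):d_A(b,b')\ge\delta\}$ of a continuous injection of a compact set, while positive for $\delta>0$, can tend to $0$ as $\delta\to 0^+$ faster than any linear function (take $B$ containing pairs $b_n,b_n'$ with $d_A(b_n,b_n')=4^{-n}$ and $d_\IU(f(b_n),f(b_n'))=2^{-2^n}$). Since your inequality $\ell\le\tau$ genuinely uses the subadditivity and monotonicity of $g$ (to get $g(d_A(a,x))\le g(d_A(a,x'))+g(d_A(x,x'))$), the lower-gauge bookkeeping cannot be carried out for such $f$, and the injective case of the lemma is not established by your argument as written.
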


\begin{proof} Fix a countable dense subset $\{a_n\}_{n\in\IN}$ in $A$ and put $B_0=B$ and $B_{n+1}=B_n\cup\{a_{n+1}\}$ for $n\in\w$. Let $f_0=f$. By induction we shall construct a sequence of (injective) maps $(f_n:B_n\to \IU)_{n\in\w}$ such that {$f_{n+1}|B_{n}=f_n$ and $\w_{f_{n}}\le\varphi$} for all $n\in\w$. Assume that for some $n\in\w$ the map $f_n:B_n\to\IU$ has been constructed. Consider the compact space $B_{n+1}=B_n\cup \{a_{n+1}\}$. If $B_{n+1}=B_n$, then put $f_{n+1}=f_n$. If $B_{n+1}\ne B_n$, then $a_{n+1}\notin B_n$. Consider the subspace $f_n(B_n)\subset \IU$ of the Urysohn space and fix any point $y\notin f_n(B_n)$. On the union $Y=f_n(B_n)\cup\{y\}$ consider the metric $d_Y$ which coincides on $f_n(B_n)$ with the metric $d_{\IU}$ of the Urysohn space $\IU$ and
$$d_Y(z,y)=\min\{d_{\IU}(z,f_n(b))+\varphi(d_A(b,a_{n+1})):b\in B_n\}$$ for $z\in f_n(B_n)$. It follows from the compactness of $B_n$ and $a_{n+1}\notin B_n$ that $d_Y(z,y)>0$ for every $z\in f_n(B_n)$. Let us show that the metric $d_Y$ satisfies the triangle inequality and hence is well-defined. Indeed, for any points $z,z'\in f_n(B_n)$, we can find points $b,b'\in B_n$ such that $d_Y(z,y)=d_\IU(z,f_n(b))+\varphi(d_A(b,a_{n+1}))$ and $d_Y(z',y)=d_\IU(z',f_n(b'))+\varphi(d_A(b',a_{n+1}))$. Then
$$
\begin{aligned}
d_Y(z,z')&=d_{\IU}(z,z')\le d_\IU(z,f_n(b))+d_\IU(f_n(b),f_n(b'))+d_\IU(f_n(b'),z')\le\\
&\le
d_\IU(z,f_n(b))+\varphi(d_A(b,b'))+d_\IU(f_n(b'),z')\le \\
&\le
d_\IU(z,f_n(b))+\varphi\big(d_A(b,a_{n+1})+d_A(a_{n+1},b')\big)+d_\IU(f_n(b'),z')\le\\
&\le
d_\IU(z,f_n(b))+\varphi(d_A(b,a_{n+1}))+\varphi(d_A(a_{n+1},b'))+d_\IU(f_n(b')',z')=
d_Y(z,y)+d_Y(y,z').
\end{aligned}
$$
On the other hand,
$$d_Y(z,y)\le d_\IU(z,f_n(b'))+\varphi(d_A(b',a_{n+1}))\le d_\IU(z,z')+d(z',f_n(b'))+\varphi(d_A(b',a_{n+1}))=d_Y(z,z')+d_Y(z',y).$$
So, the metric $d_Y$ satisfies the triangle inequality and hence is well-defined.

By \cite[Theorem 4.1]{Mel}, the identity embedding $f_n(B_n)\to\IU$ extends to an isometric embedding $e:Y\to \IU$. Let $f_{n+1}:B_{n+1}\to\IU$ be the map such that $f_{n+1}|B_n=f_n$ and $f_{n+1}(a_{n+1})=e(y)$.
Since $e(y)\notin f_n(B_n)$, the map $f_{n+1}$ is injective if so is the map $f_n$. It follows from $\w_{f_n}\le\varphi$ and $$d_\IU(f_{n+1}(x),f_{n+1}(a_{n+1}))=d_Y(f_n(x),y)\le \varphi(d_A(x,a_{n+1})) \mbox{ \ for \ }x\in B_n$$that $\w_{f_{n+1}}\le\varphi$. This completes the inductive step.
\smallskip

After the completion of the inductive construction, consider the map $f_\w:B_\w\to \IU$ defined on the set $B_\w=\bigcup_{n\in\w}B_n$ by $f_\w|B_n=f_n$ for $n\in\w$. It follows from $\w_{f_n}\le\varphi$, $n\in\w$, that $\w_{f_\w}\le\varphi$. This implies that the map $f_\w$ is uniformly continuous and hence extends to a uniformly continuous map $\bar f:A\to \IU$ having the oscillation $\w_{\bar f}\le\varphi$. It is clear that $\bar f|B=f_0=f$.
\end{proof}

Now we are able to present the {\em Proof of Theorem~\ref{main2}.}

Given a (Banach) Rakotch fractal $X$, choose a function system $\F$ consisting of (Banach) Rakotch contractions of $X$ such that $X=\bigcup_{f\in\F}f(X)$. It follows that all maps $f\in\F$ are $\varphi$-contracting for some continuity modulus $\varphi$ such that $\sup_{t\ge \delta}\varphi(t)/t<1$ for all $\delta>0$. (Moreover, if all maps $f\in\F$ are Banach contractions, then we can assume that $\sup_{t>0}\varphi(t)/t<1$).

Since the universal Urysohn space $\IU$ contains an isometric copy of each compact metric space, we can assume that $X$ is a subspace of $\IU$. By Lemma~\ref{l3.1}, each map $f\in\F$ extends to a map $\bar f:\IU\to\IU$ such that $\w_{\bar f}\le\varphi$, which implies that $\bar f$ is a (Banach) Rakotch contraction of $\IU$. By Theorems~\ref{T2} and \ref{global}, the function system $\bar \F=\{\bar f:f\in\F\}$ on the universal Urysohn space $\IU$ has an attractor $A_{\bar\F}$, which is a unique fixed point of the map $\bar\F:\IK(\IU)\to\IK(\IU)$ on the hyperspace $\IK(\IU)$ of the Urysohn space $\IU$. Taking into account that $\bar\F(X)=
\bigcup_{f\in\F}\bar f(X)=\bigcup_{f\in\F}f(X)=X$, we conclude that $X=\A_{\bar\F}$.

\end{document}